\numberwithin{equation}{section}
\let\OLDthebibliography\thebibliography
\renewcommand\thebibliography[1]{
  \OLDthebibliography{#1}
  \setlength{\parskip}{0pt}
  \setlength{\itemsep}{2pt plus 0.5ex}
}
\def\@cite#1#2{{\m@th\upshape\bfseries%
[{#1\if@tempswa{\m@th\upshape\mdseries, #2}\fi}]}}
\theoremstyle{plain}
\newtheorem{theorem}{Theorem}[section]
\theoremstyle{definition}
\newtheorem{definition}[theorem]{Definition}
\newtheorem{example}[theorem]{Example}
\theoremstyle{remark}
\renewcommand{\H}{{\mathcal{H}}}
  \newcommand{\J}{{\mathcal{J}}}
\renewcommand{\P}{{\mathcal{P}}}
\renewcommand{\S}{{\mathcal{S}}}
  \newcommand{\T}{{\mathcal{T}}}
  \newcommand{\U}{{\mathcal{U}}}
\newcommand{\ca}{\mathrm{C}^*}
\newcommand{\wot}{\textsc{wot}}
\newcommand{\alg}{\operatorname{alg}}
\newcommand{\sca}[1]{\left\langle#1\right\rangle} 
\newcommand{\nor}[1]{\left\Vert #1\right\Vert} 
\newtheorem*{theorem*}{Theorem}
\newtheorem*{corollary*}{Corollary}
\newtheorem*{proposition*}{Proposition}
\newtheorem*{lemma*}{Lemma}
\newtheorem*{remark*}{Remark}
\newtheorem*{definition*}{Definition}
\begin{document}

\title
{Nondegenerate actions on Hilbert $\ca$-modules}

\author[Elias~Katsoulis]{Elias~G.~Katsoulis}
\address {Department of Mathematics 
\\East Carolina University\\ Greenville, NC 27858\\USA}
\email{katsoulise@ecu.edu}

\author{Christopher~Ramsey}
\address {Department of Mathematics and Statistics
\\MacEwan University \\ Edmonton, AB \\Canada}
\email{ramseyc5@macewan.ca}

\thanks{2020 {\it  Mathematics Subject Classification.} 46L08, 46L05}

\thanks{{\it Key words and phrases:} Hilbert C*-module, C*-corresondence, nondegenerate action.}

\begin{abstract}
Let $X$ be a (right) Hilbert $\ca$-module and let $B$ be a $\ca$-algebra acting on $X$ from the left via adjointable operators. In this note we establish  the equivalence of two notions of nondegeneracy for such an action of $B$ on $X$. Furthermore, we explain how techniques from an earlier paper of Kim can be used to give a short proof of a recent result of Dessi, Kakariadis and Paraskevas.
 \end{abstract}

\maketitle




\section{Introduction}

The purpose of this short note is twofold. First we will explain how the work of Kim \cite{Kim} leads to a quick proof of a recent result of Dessi, Kakariadis and Paraskevas  \cite[Theorem 4.4 ((vi) $\Rightarrow$ (viii)]{DKP}, which answered a question from \cite{KR2}. Secondly, we will elaborate on Kim's arguments with the purpose of providing a new result in the theory of Hilbert $\ca$-modules, Theorem~\ref{main;cor}, regarding the non-degeneracy of left actions on Hilbert $\ca$-modules via adjointable operators. This is a more general result than that of Dessi, Kakariadis and Paraskevas and the focal result of this note.

In 2018, the authors wrote a paper \cite{KR1} regarding the Hao-Ng problem. One of the key results in that paper was a condition sufficient for the hyperrigidity of a tensor algebra of a $\ca$-correspondence $(X, A)$: the left action of Katsura's ideal $\J_X$ on $X$ should be nondegenerate, i.e., $[\J_XX] = X$. (The original statement of the result required $X$ to be countably generated but the proof worked equally well for arbitrary $\ca$-correspondences, as noted in later iterations and the final version of \cite{KR1}.) Note that the nondegeneracy of the left action of $\J_X$ automatically implies that the $\ca$-correspondence $(X, A)$ is nondegenerate.

On May 24, 2019, two papers \cite{KR2, Kim} appeared independently on the arXiv addressing the converse of the above result (both are now published). In one of these two papers \cite{Kim}, Kim proved the action of Katsura's ideal being nondegenerate is also a necessary condition for the tensor algebra of $X$ to be hyperrigid. Thus, combining Kim's result with our earlier result in \cite{KR1}, the following complete characterization was obtained. 

\begin{theorem}[Katsoulis and Ramsey, Kim] Let $(X, A)$ be a nondegenerate \\ $\ca$-correspondence. Then the tensor algebra $\T^+(X)$ is hyperrigid if and only the left action of Katsura's ideal $\J_X$ on $X$ is nondegenerate, i.e. $[ \J_X X ]=X$.
\end{theorem}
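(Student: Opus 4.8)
The assertion is the conjunction of two implications: the ``if'' half is the main theorem of \cite{KR1} and the ``only if'' half is Kim's theorem in \cite{Kim}, so the plan is to recall both, organised around the fact that $\O_X=\cenv(\T^+(X))$ (Muhly--Solel) and around the standard characterization of hyperrigidity (going back to Arveson, via Dritschel--McCullough's description of maximal representations): $\T^+(X)$ is hyperrigid exactly when every nondegenerate $*$-representation $\rho$ of $\O_X$ is the only unital completely positive map $\O_X\to\B(H)$ agreeing with $\rho$ on $\T^+(X)$. Throughout, write $(\pi,t)$ for the universal covariant representation of $(X,A)$: thus $\O_X$ is generated by $\pi(A)$ and $t(X)$, with $t(\xi)^*t(\eta)=\pi(\langle\xi,\eta\rangle)$, $\pi(a)t(\xi)=t(\phi(a)\xi)$ for the left action $\phi\colon A\to\L(X)$, and $\pi(a)=\psi_t(\phi(a))$ for $a\in\J_X$, where $\psi_t(\theta_{\xi,\eta})=t(\xi)t(\eta)^*$.

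For $[\J_XX]=X\Rightarrow$ hyperrigidity, take a nondegenerate $*$-representation $\rho$ of $\O_X$ on $H$ and a ucp map $\varphi\colon\O_X\to\B(H)$ agreeing with $\rho$ on $\T^+(X)$; the goal is $\varphi=\rho$. Dilate $\varphi$ by Stinespring to a $*$-representation $\psi$ of $\O_X$ on $K=H\oplus H'$ with $\varphi(x)=P_H\psi(x)|_{H}$. Since $a\mapsto P_H\psi(\pi(a))|_{H}=\rho(\pi(a))$ is multiplicative, $H$ reduces $\psi(\pi(A))$ (a projection under which a $*$-representation compresses multiplicatively commutes with it). Writing $\psi(t(\xi))$ as a $2\times2$ operator matrix relative to $H\oplus H'$, the relations $t(\xi)^*t(\eta)=\pi(\langle\xi,\eta\rangle)$ force its lower-left corner to vanish and its upper-right corner $C(\xi)$ to have range in $H_0:=\big[\rho(t(X))H\big]^{\perp}$. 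On $H_0$ every $\rho(t(\mu))\rho(t(\nu))^*$ vanishes, hence so does every $\rho(\psi_t(k))$ with $k\in\K(X)$, hence --- by covariance --- so does $\rho(\pi(a))$ for $a\in\J_X$; that is, $\rho(\pi(\J_X))H_0=\{0\}$. The intertwining relation $\pi(a)t(\xi)=t(\phi(a)\xi)$ then gives $C(\phi(a)\xi)=\rho(\pi(a))C(\xi)=0$ for $a\in\J_X$, and since $[\J_XX]=X$ and $C$ is bounded and linear, $C\equiv0$. Thus $H$ reduces both $\psi(\pi(A))$ and $\psi(t(X))$, hence all of $\psi(\O_X)$, so $\varphi=\rho$ and hyperrigidity follows.

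For the converse I would argue contrapositively, converting the computation above into a construction; this is essentially Kim's argument in \cite{Kim}. Assuming $[\J_XX]\subsetneq X$, one seeks a nondegenerate $*$-representation $\rho$ of $\O_X$ with nonzero defect space $H_0=[\rho(t(X))H]^{\perp}$, an auxiliary $*$-representation $\rho'$ of $A$ on some $H'$, and bounded bimodule maps $C\colon X\to\B(H',H_0)$, $E\colon X\to\B(H')$ with $C$ nonzero but vanishing on $[\J_XX]$ and with $C(\xi)^*C(\xi)+E(\xi)^*E(\xi)\le\rho'(\langle\xi,\xi\rangle)$, so that $\sigma=\rho\circ\pi\oplus\rho'$ and $T(\xi)=\left(\begin{smallmatrix}\rho(t(\xi)) & C(\xi)\\ 0 & E(\xi)\end{smallmatrix}\right)$ define a completely contractive representation of $\T^+(X)$ on $H\oplus H'$ that dilates $\rho|_{\T^+(X)}$ with $H$ a non-reducing invariant subspace. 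Kim's construction in \cite{Kim} produces exactly such data when $[\J_XX]\subsetneq X$; the nontrivial dilation it yields shows $\rho|_{\T^+(X)}$ is not maximal, so $\T^+(X)$ is not hyperrigid.

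The step I expect to be the genuine obstacle is this last construction: choosing $H_0$, $\rho'$, $C$ and $E$ concretely and verifying complete contractivity of $(\sigma,T)$ is the technical heart of \cite{Kim}, and it is precisely here that the failure of nondegeneracy is exploited --- on the directions of $X$ not lying in $[\J_XX]$ the bimodule and Toeplitz relations leave $C$ unconstrained. By contrast, once one passes to Stinespring dilations and the maximality viewpoint, the forward implication reduces to the corner-chasing above, whose only real inputs are the relations $t(\xi)^*t(\eta)=\pi(\langle\xi,\eta\rangle)$, the Cuntz--Pimsner covariance relation on $\J_X$, and the hypothesis $[\J_XX]=X$.
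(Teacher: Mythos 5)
Your proposal is correct and matches the paper's treatment: the paper offers no proof of this statement beyond attributing the two implications to \cite{KR1} and \cite{Kim} respectively, and your decomposition is exactly that. Your fleshed-out forward direction (Stinespring dilation, multiplicative domain, the vanishing of the lower-left corner via $t(\xi)^*t(\eta)=\pi(\langle\xi,\eta\rangle)$, the range of the upper-right corner landing in $[\rho(t(X))H]^\perp$ where $\rho(\pi(\J_X))$ acts as zero by covariance, and then $[\J_XX]=X$ forcing $C\equiv0$) is a faithful and correct rendition of the argument of \cite{KR1}, while the converse is, as in the paper itself, deferred to Kim's construction in \cite{Kim}.
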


The other paper \cite{KR2}, written by the authors, contained a potentially weaker result on $\sigma$-degeneracy than that of Kim in \cite{Kim} on degeneracy.

\begin{definition}
Let $X$ be a $\ca$-correspondence over a $\ca$-algebra $A$ and let $\sigma: A \rightarrow B(\H)$ be a representation of $A$ on Hilbert space $\H$. If $C$ is a $\ca$-subalgebra of $A$, then the left action of $C$ on $X$ is said to be $\sigma$-nondegenerate if and only if $ CX \otimes_{\sigma}\H = X \otimes_{\sigma}\H $. (Otherwise, we say that $C$ acts $\sigma$-degenerately on $X$.)
\end{definition}

Specifically, the authors showed in  \cite{KR2} that if $\J_X$ acts $\sigma$-degenerately on $X$ then $\T^+(X)$ cannot be hyperrigid. In other words for $\T^+(X)$ to be hyperrigid, the left action of $\J_X$ on $X$ must be spatially nondegenerate on $X$ on all of its Hilbert space representations. This seems to be a weaker condition than the nondegeneracy of the left action of $\J_X$ on $X$. Indeed the nondegeneracy of the left action of $\J_X$ on $X$ forces $\sigma$-nondegenerancy but it was not clear what happens the other way around. Nevertheless, it was shown in \cite{KR2} that the two notions of nondegeneracy are equivalent for $\ca$-correspondences of topological graphs and most importantly, a condition on the graph was given of when $\J_X$ acts nondegenerately on $X$ for such correspondences. The general case was left as an open question in \cite{KR2}. Recently, Dessi, Kakariadis and Paraskevas \cite{DKP} verified that these two notions of non-degeneracy are indeed equivalent for \textit{Katsura's ideal} and all $\ca$-correspondences, thus resolving the open question from \cite{KR2}. As promised earlier, we will give a very short proof of the Dessi, Kakariadis and Paraskevas and actually more.

\section{The main results}

We begin with our first task of proving the Dessi, Kakariadis and Paraskevas result \cite[Theorem 4.4]{DKP} using Kim's ultrafilter argument. For the benefit of the reader acquainted with \cite{Kim}, we retain Kim's notation from the proof of \cite[Theorem 3.5]{Kim}. All other readers are directed to the more general Theorem~\ref{elem} below for which we give all pertinent definitions and a complete proof. 

\begin{theorem}[Theorem 4.4 ((vi) $\Rightarrow$ (viii)) Dessi, Kakariadis and Paraskevas \cite{DKP}]  \label{DKPthm} Let $X$ be a nondegenerate $\ca$-correspondence over a $\ca$-algebra $A$. If the left action of $\J_X$ on $X$ is degenerate, then there exists a representation $\sigma: A\rightarrow B(\H)$ so that the left action of $\J_X$ on $X$ is $\sigma$-degenerate.
\end{theorem}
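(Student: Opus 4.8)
The plan is to argue by contradiction, streamlining Kim's ultrafilter argument. Suppose the left action of $\J_X$ on $X$ is degenerate, so that $[\J_X X]$ is a proper closed submodule of $X$. I would first fix $\xi_0\in X$ with $\|\xi_0\|=1$ and $d:=\dist\bigl(\xi_0,[\J_X X]\bigr)>0$, together with an increasing approximate unit $(e_\lambda)_{\lambda\in\Lambda}$ for the $\cstar$-algebra $\J_X$ (I write $e_\lambda\xi$ for the left action on $\xi\in X$). The key preliminary move is to work not with the ``obvious'' net $\ip{\xi_0,(1-e_\lambda)^2\xi_0}$ but with its first-power cousin
\[
c_\lambda \;:=\; \ip{\xi_0,\,(1-e_\lambda)\xi_0} \;=\; \ip{\xi_0,\xi_0}-\ip{\xi_0,e_\lambda\xi_0}\;\in\;A_+ .
\]
Since $0\le e_\lambda\le e_\mu\le 1$ for $\lambda\le\mu$, the net $(c_\lambda)$ is \emph{decreasing} in $A_+$ --- this is exactly where it matters that $t\mapsto 1-t$ is operator monotone while $t\mapsto(1-t)^2$ is not. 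Moreover $(1-e_\lambda)\xi_0\equiv\xi_0\pmod{[\J_X X]}$ and $(1-e_\lambda)^2\le 1-e_\lambda$, whence $\|c_\lambda\|\ge\|(1-e_\lambda)\xi_0\|^2\ge d^2$ for \emph{every} $\lambda$.

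Next I would manufacture the representation. For each $\lambda$ choose a state $\rho_\lambda$ on $A$ with $\rho_\lambda(c_\lambda)=\|c_\lambda\|$, fix an ultrafilter $\U$ on the directed set $\Lambda$ refining the tail filter (so that $\{\mu:\mu\ge\lambda\}\in\U$ for every $\lambda$), and let $\rho:=\lim_\U\rho_\lambda$ be the weak-$*$ ultralimit, a positive functional on $A$. The tail condition pays off immediately: for fixed $\mu$ and any $\lambda\ge\mu$, monotonicity of the net $(c_\lambda)$ gives $\rho_\lambda(c_\mu)\ge\rho_\lambda(c_\lambda)=\|c_\lambda\|\ge d^2$, and such $\lambda$ form a set in $\U$, so
\[
\rho(c_\mu)\;\ge\; d^2 \qquad\text{for every }\mu\in\Lambda .
\]
I would then let $(\H,\sigma,h)$ be the GNS data of $\rho$, giving a representation $\sigma: A\to B(\H)$ with $\ip{h,\sigma(a)h}=\rho(a)$ for all $a\in A$. (One could equally use the Hilbert-space ultraproduct of the GNS spaces of the $\rho_\lambda$ with the unit vector $(k_\lambda)_\U$, which is closer to Kim's formulation; the computation below is the same.)

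Finally I would read off $\sigma$-degeneracy inside the tensor product $X\otimes_\sigma\H$. For the vector $v:=\xi_0\otimes h$, the formula for the inner product on $X\otimes_\sigma\H$ gives, for every $\mu$,
\[
\ip{(1-e_\mu\otimes 1)\,v,\;v}\;=\;\rho\bigl(\ip{\xi_0,\xi_0}\bigr)-\rho\bigl(\ip{\xi_0,e_\mu\xi_0}\bigr)\;=\;\rho(c_\mu)\;\ge\; d^2 .
\]
On the other hand, $\J_X X\otimes_\sigma\H$ is the closed linear span of the vectors $b\zeta\otimes k$ with $b\in\J_X$, $\zeta\in X$, $k\in\H$, and on this closed subspace the uniformly bounded net $(e_\mu\otimes 1)$ converges strongly to the identity --- it does so on the dense set of finite sums $\sum_i b_i\zeta_i\otimes k_i$ with $b_i\in\J_X$, because $e_\mu b_i\to b_i$ in $A$. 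Hence, if $v$ belonged to $\J_X X\otimes_\sigma\H$, we would have $\ip{(1-e_\mu\otimes 1)v,\,v}\to 0$, contradicting the displayed lower bound. Therefore $\xi_0\otimes h\notin\J_X X\otimes_\sigma\H$, that is, $\J_X X\otimes_\sigma\H\ne X\otimes_\sigma\H$, so the left action of $\J_X$ on $X$ is $\sigma$-degenerate, as required.

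The step I expect to be the real obstacle is the first one: recognising that the monotone gadget $c_\lambda=\ip{\xi_0,(1-e_\lambda)\xi_0}$ is the object to track, and keeping straight the equivalence between $\rho(c_\mu)\not\to 0$ and $\sigma$-degeneracy of the $\J_X$-action; after that, the ultrafilter-over-tails trick and the GNS construction are routine. I would also note that the argument uses nothing about $\J_X$ beyond its being a $\cstar$-subalgebra of $A$ acting on $X$ by adjointable operators, and uses no nondegeneracy hypothesis on $(X,A)$; this is precisely why the same proof yields the more general Theorem~\ref{main;cor}.
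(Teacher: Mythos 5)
Your proof is correct, and it runs on the same engine as the paper's (which is Kim's): exploit the monotonicity of $c\mapsto\ip{x,(1-c)x}$ over an increasing approximate unit together with an ultrafilter refining the tail filter, so that the ultralimit retains the lower bound $d^2$ at every \emph{fixed} index $\mu$. Where you genuinely differ is in the packaging. The paper's self-contained version (the proof of Theorem~\ref{elem}) first represents the correspondence faithfully on a Hilbert space $\H_0$, picks norming unit vectors $h_c$, builds the Hilbert-space ultraproduct $\prod_{\U}\H_0$ carrying a Toeplitz representation $(\pi,t)$ of $(X,A)$, and detects degeneracy via the support projection $P$ of $\pi(C)$ together with the canonical unitary $X\otimes_\pi\H\cong[t(X)\H]$. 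You instead take norming states $\rho_\lambda$ for the positive elements $c_\lambda\in A_+$, form a single positive functional as a weak-$*$ ultralimit, and pass to its GNS representation; degeneracy is then read off directly inside $X\otimes_\sigma\H$ from the fact that $e_\mu\otimes 1\to 1$ strongly on the closure of $\J_X X\otimes_\sigma\H$ but not at $\xi_0\otimes h$. This buys two things: you never need to represent $X$ itself (only $A$), and, as you observe, the argument applies verbatim to any $\cstar$-algebra acting by adjointable operators on a Hilbert $A$-module, so it yields Theorem~\ref{main;cor} directly, without the $A\oplus B$ doubling trick the paper uses to reduce that statement to Theorem~\ref{elem}. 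Two cosmetic remarks: the monotonicity of $(c_\lambda)$ is simply positivity and linearity of $T\mapsto\ip{\xi_0,T\xi_0}$ applied to $1-e_\mu\le 1-e_\lambda$ (operator monotonicity of $t\mapsto 1-t$ is not really the point, though your instinct to avoid $(1-e_\lambda)^2$ is exactly the right one, and your inequality $\|c_\lambda\|\ge\|(1-e_\lambda)\xi_0\|^2$ via $(1-e_\lambda)^2\le 1-e_\lambda$ is correct); and your distance formulation $d=\dist(\xi_0,[\J_X X])>0$ is a clean substitute for the paper's ``there exist $x$ and $\epsilon$ with $\|x-cx\|>\epsilon$ for all $c$.''
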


\begin{proof}
Assuming that the left action of $\J_X$ on $X$ is degenerate  \cite[pg. 12, line 14]{Kim}, Kim identifies in  \cite[pg. 14, line 13]{Kim} a representation 
\[
(\overline{\pi}^0, \overline{\pi}^1)\colon (A, X)\longrightarrow B(H^{\U}),
\]
and an element $x \in X$, so that $P \overline{\pi}^1(x)\neq x$, where $P$ is the support projection of $ \overline{\pi}^0(\J_X)$. This implies that $(I-P)\overline{\pi}^1(X)(H^{\U})\neq\{0\}$. Hence $\overline{\pi}^1(X)(H^{\U})\nsubseteq \overline{\pi}^0(\J_X)(H^{\U})$. Therefore, with the aid of the unitary operator 
\[
U: X\otimes_{\overline{\pi}^0} H^{\U} \longrightarrow [\overline{\pi}^1 (X)H^{\U}]; \xi \otimes k \longmapsto \overline{\pi}^1 (\xi)k
\]
we see that $\J_X$ acts $\overline{\pi}^0$-degenerately on $X$, or otherwise 
\[
[\overline{\pi}^1 (X)H^{\U}] = [\overline{\pi}^1 (\J_XX)H^{\U}] \subseteq [\overline{\pi}^0 (\J_X)H^{\U}] ,
\]
which is a contradiction. This completes the proof.
\end{proof}

We now revisit Kim's arguments from \cite[Theorem 3.4]{Kim} with the goal of providing a new result in the theory of Hilbert $\ca$-modules, i.e., Theorem~\ref{main;cor}. From this result (or Theorem~\ref{elem}) one can easily deduce Theorem~\ref{DKPthm}, since $\J_X$ is just an ideal of the ambient $\ca$-algebra $A$. This is the only information one needs to know about $\J_X$ and so we will skip its definition, or that of a tensor algebra. All other definitions are now given.

Let $A$ be a $\ca$-algebra, then an \emph{inner-product right $A$-module} is a linear space $X$ which is a right $A$-module together with a map
 \begin{align*}
\sca{\cdot,\cdot}\colon X \times X \rightarrow A
\end{align*}
which satisfies the properties of an $A$-valued inner product with linearity holding for the second variable. (See \cite{Lan} for precise definitions.)
For $\xi \in X$ we write $\nor{\xi}^2 := \nor{\sca{\xi,\xi}}$ and one can deduce that $\nor{\cdot}$ is actually a norm. The space $X$ equipped with that norm will be called an \emph{ Hilbert $A$-module} if it is complete. For a Hilbert $A$-module $X$ we define the set $L(X)$ of the \emph{adjointable maps} that consists of all maps $s:X \rightarrow X$ for which there is a map $s^*: X \rightarrow X$ such that $\sca{s\xi,\eta}= \sca{\xi,s^*\eta}$, $\xi, \eta \in X$.

A $\ca$-correspondence $(X, A)$ consists of a Hilbert A-module $(X, A)$ and an action of $A$ on $X$ from the left, i.e., a $*$-homomorphism $\phi: A \longrightarrow L(X)$. (We write $a\xi$ instead of $\phi(a)\xi$, $a \in A$, $\xi \in X$.)
If $(X, A)$ is a $\ca$-correspondence, then a (Toeplitz) representation $(\pi,t)$ of $(X, A)$ on Hilbert space $\H$, is a pair of a $*$-homomorphism $\pi\colon A \rightarrow B(\H)$ and a linear map $t\colon X \rightarrow B(\H)$, such that
\begin{enumerate}
 \item $\pi(a)t(\xi)\pi(a')=t(a\xi a')$,
 \item $t(\xi)^*t(\eta)=\pi(\sca{\xi,\eta})$, 
\end{enumerate}
for all $a, a' \in A$ and $\xi, \eta \in X$.
Finally, if $(X, A)$ is a Hilbert $A$-module and $\sigma: A \rightarrow B(\H)$ a Hilbert space representation, then $X \otimes_{\sigma} \H$ will denote the stabilized tensor product of $X$ and $\H$, i.e., the Hilbert space\footnote{actually, anti-Hilbert space}  generated by the quotient of the vector space tensor product $X \otimes_{\alg} \H$ by the subspace generated by the elements of the form
\begin{align*}
\xi a \otimes h - \xi \otimes \sigma(a) h , \quad \xi, \eta \in X, a
\in A,
\end{align*}
which becomes a pre-Hilbert space when equipped with
\[
\sca{\xi_1\otimes h_1, \xi_2\otimes h_2} := \sca{h_1,
\sigma(\sca{\xi_1,\xi_2}) h_2}, \quad \xi_1,\xi_2 \in X, h_1,h_2\in \H.
\]

\begin{theorem} \label{elem} Let $X$ be a $\ca$-correspondence over a $\ca$-algebra $A$ and let $C$ be a $\ca$-subalgebra of $A$. If the left action of $C$ on $X$ is degenerate, then there exists a representation $(\pi, t)$ of $(X, A)$ on Hilbert space $\H$ so that the left action of $C$ on $X$ is $\pi$-degenerate.
\end{theorem}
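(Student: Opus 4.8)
The plan is to follow Kim's ultrafilter construction, stripping away everything specific to Katsura's ideal or tensor algebras, and retaining only the fact that $C$ is a $\ca$-subalgebra acting on $X$ by adjointable operators. Recall the hypothesis: the left action is \emph{degenerate}, meaning $[CX] \neq X$, so we may fix $\xi_0 \in X$ with $\xi_0 \notin [CX]$. The goal is to manufacture a representation $\pi$ of $A$ on some Hilbert space $\H$ together with a compatible $t$ on $X$ so that $\pi$ witnesses $\sigma$-degeneracy of the $C$-action, i.e. $[CX \otimes_\pi \H] \neq X \otimes_\pi \H$.

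First I would pick an approximate unit $(e_\lambda)_{\lambda \in \Lambda}$ for $C$ (a $\ca$-algebra always has one). For a free ultrafilter $\U$ on $\Lambda$ one builds an ultraproduct-type Hilbert space $H^\U$: start from a faithful nondegenerate representation of $A$ on a Hilbert space $H$, form $\ell^\infty(\Lambda, H)$, pass to the quotient by the ideal of sequences whose norm tends to $0$ along $\U$, and equip it with the limit inner product $\langle (k_\lambda), (h_\lambda)\rangle_\U = \lim_\U \langle k_\lambda, h_\lambda\rangle$. One gets induced maps $\overline\pi^0\colon A \to B(H^\U)$ (apply $\pi$ coordinatewise) and, using the representation $(\pi,t)$ of $(X,A)$ coming from the Fock construction, a linear map $\overline\pi^1\colon X \to B(H^\U)$ satisfying the Toeplitz relations (i)--(ii) with respect to $\overline\pi^0$. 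The key point in Kim's argument, which I would reproduce, is that because $(e_\lambda)$ is an approximate unit for $C$ but $\xi_0 \notin [CX]$, the vector obtained from $\xi_0$ "sits outside" the range of the support projection $P$ of $\overline\pi^0(C)$: concretely, $\overline\pi^1(\xi_0)$ applied to a suitable cyclic-type vector in $H^\U$ is not fixed by $P$, so $(I-P)\overline\pi^1(X)H^\U \neq \{0\}$, and hence $\overline\pi^1(X)H^\U \not\subseteq \overline{\overline\pi^0(C)H^\U} = PH^\U$.

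Having that, I would finish exactly as in the proof of Theorem~\ref{DKPthm} above. The stabilized tensor product $X \otimes_{\overline\pi^0} H^\U$ is unitarily identified with $[\overline\pi^1(X)H^\U]$ via $U\colon \xi \otimes k \mapsto \overline\pi^1(\xi)k$, and under this identification the subspace $[CX \otimes_{\overline\pi^0} H^\U]$ corresponds to $[\overline\pi^1(CX)H^\U] \subseteq [\overline\pi^0(C)H^\U] = PH^\U$. Since $\overline\pi^1(X)H^\U \not\subseteq PH^\U$, the two subspaces cannot coincide, so the left action of $C$ on $X$ is $\overline\pi^0$-degenerate; taking $\pi := \overline\pi^0$, $t := \overline\pi^1$ and $\H := H^\U$ gives the representation required by the theorem.

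The main obstacle I anticipate is verifying cleanly that $\overline\pi^1$ is well defined on $H^\U$ and satisfies the Toeplitz relations — one must check that coordinatewise application of $t$ descends to the ultraproduct quotient (boundedness of $t(\xi)$ by $\|\xi\|$ makes this routine) — and, more substantively, pinning down the vector in $H^\U$ that detects $\xi_0 \notin [CX]$. The natural choice is a sequence built from $\overline\pi^1(\xi_0)$-images or from approximate-unit-twisted versions of a fixed vector; one needs the estimate $\liminf_\lambda \|\xi_0 - e_\lambda \xi_0\| > 0$, which follows from $\xi_0 \notin [CX]$ together with the fact that $\lim_\lambda e_\lambda \eta = \eta$ for $\eta \in [CX]$, and then transport this lower bound through the inner product on $X \otimes_{\overline\pi^0} H^\U$ using the faithfulness of the starting representation of $A$. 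Once that single separation estimate is in hand, the rest is bookkeeping with the unitary $U$ and the support projection $P$.
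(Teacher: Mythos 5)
Your outline follows exactly the paper's route: both are distillations of Kim's ultrafilter argument, building an ultraproduct representation from a faithful representation of $(X,A)$, taking the support projection $P$ of the image of $C$, and converting $(I-P)t(X)\H\neq\{0\}$ into $\pi$-degeneracy via the unitary $U\colon\xi\otimes k\mapsto t(\xi)k$; your closing paragraph is verbatim the paper's endgame. However, the step you dismiss as ``transport this lower bound through the inner product \dots the rest is bookkeeping'' is precisely where the only real idea of the proof lives, and your plan does not supply it. The estimate $\liminf_\lambda\|\xi_0-e_\lambda\xi_0\|>\epsilon$ does not by itself yield a vector in the ultraproduct separated from $P\H$: you need a single vector $h=(h_c)_c$ with $\langle(t(x)^*t(x)-t(x)^*\pi(b)t(x))h, h\rangle\geq\epsilon$ for \emph{every} $b$ simultaneously, and if $h_c$ is chosen merely to witness $\|x-e_cx\|>\epsilon$ there is no control whatsoever over $\langle(x^*x-x^*bx)h_c, h_c\rangle$ when $b\neq c$, so the limit along $\U$ for a fixed $b$ can collapse. (The paper's own example of $K(\H)$ acting on $B(\H)$ shows that a degeneracy estimate need not survive passage to a given representation, so something must force it to survive here.)

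The paper's mechanism, which your proposal omits, has three interlocking pieces: (a) replace $\|x-cx\|$ by $\|(1-c)^{1/2}x\|$, so that the relevant quantity $\langle x,(1-c)x\rangle$ is \emph{monotone decreasing} in $c$ (squares of positive operators are not operator monotone, square roots are the fix); (b) index the construction by the set $\S$ of positive contractions of $C$ with its operator order and choose, for each $c$, a unit vector $h_c$ witnessing $\langle(x^*x-x^*cx)h_c, h_c\rangle>\epsilon$; (c) take $\U$ to be an ultrafilter containing every tail $\{c\in\S: c\geq b\}$ --- a generic ``free ultrafilter on $\Lambda$,'' as in your sketch, need not contain these tails. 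Monotonicity then gives $\langle(x^*x-x^*bx)h_c, h_c\rangle\geq\langle(x^*x-x^*cx)h_c, h_c\rangle>\epsilon$ for all $c\geq b$, and since that tail lies in $\U$ the limit along $\U$ is $\geq\epsilon$ for every $b$; taking the weak limit over $b$ gives $\|(I-P)t(x)h\|^2\geq\epsilon$. Everything else in your outline (well-definedness of the coordinatewise $\pi$ and $t$ on the ultraproduct, the identification via $U$) is indeed routine and matches the paper, but without (a)--(c) the central inequality is not established.
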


\begin{proof}
Assume that the $\ca$-correspondence $(X, A)$ is faithfully represented on a Hilbert space $\H_0$. The collection $\S$ of all positive contractions in $C$ with the usual partial ordering of positive operators forms an approximate unit for $C$. (If $C$ is unital, then simply take $\S=\{1\}$.) 

Since the left action of $C$ on $X$ is degenerate, there exists $x \in X$ and $\epsilon > 0$ so that $\| x -cx\| > \epsilon $, for all $c \in \S$. Hence 
\begin{equation} \label{first}
\| (1-c)^{1/2}x\| > \epsilon, \quad \mbox{for all } c \in \S,
\end{equation}
or otherwise $\| x -cx\|\leq  \| (1-c)^{1/2}\| \| (1-c)^{1/2}x\|\leq \epsilon$, a contradiction. Now (\ref{first}) implies that for each $c \in \S$, there exists a unit vector $h_c\in \H_0$ so that 
\begin{equation} \label{second}
\langle (x^*x - x^*cx)h_c\mid h_c\rangle = \| (1-c)^{1/2}xh_c\| > \epsilon.
\end{equation}
Let $\P(\S)$ be the powerset of $\S$
and let $\U$ be a (necessarily non-principal) ultrafilter in $\P(\S)$ containing all sets of the form $ \{c \in \S \mid c \geq b\}$, $b \in \S$. Let $\H:= \prod_{\U} \H_0$ be the ultraproduct of $\H_0$ along the ultrafilter $\U$. If $(k_c)_{c \in \S} \in \H$, then we define 
\begin{align*}
\pi(a)(k_c)_{c \in \S}&:= (ak_c)_{c \in \S}, \quad a \in A \\
t(x)(k_c)_{c \in \S} &:= (xk_c)_{c \in \S}, \quad x \in X.
\end{align*}
It is easy to see that $(\pi, t)$ is a representation of $(X, A)$. 

Let $P := \wot-\lim_{b \in \S}\pi(b)$ be the support projection of $\pi(C)$ on $\H$. For the element $x \in X$ from (\ref{second}), we have the following
  
\vspace{.1in} 
\noindent \textbf{Claim:} $ \lim_{c, \U}\{ \langle(x^*x - x^*bx)h_c\mid h_c\rangle\}_{c \in \S} \geq \epsilon$, for all $ b \in \S$.

\vspace{.07in}

\noindent By way of contradiction assume otherwise for a specific $b \in \S$. By the very definition of convergence along an ultrafilter, we have that
\[
S_b :=\{ c \in \S\mid \langle ( x^*x - x^*bx) h_c\mid h_c\rangle  < \epsilon\} \in \U.
\]
On the other hand, (\ref{second}) implies that $b \in \S_b^c$. Actually, the inequalities
\[
\langle (x^*x - x^*bx) h_c\mid h_c\rangle \geq  \langle (x^*x - x^*cx)h_c\mid h_c\rangle\ > \  \epsilon , \quad c\geq b 
\]
imply that  
\[
\U \ni \{ c \in \S \mid c \geq b\} \subseteq \S_b^c.
\]
Therefore $\S_b^c$ also belongs to $\U$, which is absurd. This proves the claim.

\vspace{0.1in}

The claim now implies that for $h :=(h_c)_{c \in \S} \in \H$, we have 
\[
\Big\langle\big(t(x)^*t(x) - t(x)^*\pi(b)t(x)\big)h \mid h \Big\rangle\geq \epsilon
\]
and by taking weak limits,
\[
\| (I-P)t(x)h\|^2 =\Big\langle\big(t(x)^*t(x) - t(x)^*P t(x)\big)h \mid h \Big\rangle\geq \epsilon.
\]
Hence $(I-P)t(X)\H \neq \{0\}$ and so $[t(X)\H] \nsubseteq [\pi(C)\H]$. Therefore, with the aid of the unitary operator 
\[
U: X\otimes_{\pi} \H \longrightarrow [t(X)\H]; \xi \otimes k \longmapsto t(\xi)k
\]
we see that $C$ acts $\pi$-degenerately on $X$, or otherwise 
\[
[t(X)\H] = [t(CX)\H] \subseteq [\pi(C)\H],
\]
which is a contradiction. This completes the proof.
\end{proof}

It is necessary to note that a degenerate action may not always lead to $\pi$-degeneracy for a given representation.

\begin{example}
Let $\H=\ell^2(\mathbb N)$ and  $(B(\H),B(\H))$ be the C*-correspondence with the inner product $\langle a,b\rangle = a^*b$. The left action of $K(\H)$ is degenerate, $[K(\H)B(\H)] = K(\H) \neq B(\H)$. Now consider the identity representation then we claim that $$K(\H)\otimes_{\operatorname{id}} \H = B(\H) \otimes_{\operatorname{id}} \H\,.$$
This follows because for any $a\in B(\H)$ and $h\in \H$, if $P_n$ is the projection onto the subspace spanned by $\{e_1, \dots, e_n\}$, then 
\begin{align*}
\|a\otimes h - aP\otimes h\| & = \langle (a-aP_n)\otimes h, (a-aP_n)\otimes h\rangle^{1/2} 
\\ & = \langle h, (a-aP_n)^*(a-aP_n)h\rangle^{1/2}
\\ & =  \| (a-aP_n)h\| \xrightarrow{n \rightarrow \infty} 0\,.
\end{align*}
Hence, the left action of $K(\H)$ is id-nondegenerate.

To get $\pi$-degeneracy we must turn to the only other non-zero irreducible representation, $\pi : B(\H) \rightarrow B(\H)/K(\H)$. Then the left action of $K(\H)$ on $B(\H)$ is $(\operatorname{id} \oplus \pi)$-degenerate.
\end{example}

The previous theorem leads to the following new criterion for the nondegeneracy of the left action of a $\ca$-algebra on a Hilbert $\ca$-module. 

\begin{theorem} \label{main;cor}
Let $A, B$ be $\ca$-algebras, let $X$ be a Hilbert $\ca$-module over $A$ and assume that $B$ is acting from the left via adjointable operators.  Then $B$ acts nongenerately on $X$ if and only if for any representation $\sigma: A \rightarrow B(\H)$, the $\ca$-algebra $B$ acts $\sigma$-nondegenerately on $X$, i.e., $ BX \otimes_{\sigma}\H = X \otimes_{\sigma}\H $.
\end{theorem}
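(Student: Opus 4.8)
The plan is to prove the two implications separately: the forward one (nondegenerate $\Rightarrow$ $\sigma$-nondegenerate for every $\sigma$) is routine, while the reverse one carries the content and I would reduce it to Theorem~\ref{elem}.

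Suppose first that $[BX]=X$ and fix a representation $\sigma\colon A\to B(\H)$. The Hilbert space $X\otimes_\sigma\H$ is the closed span of the elementary tensors $\xi\otimes h$, and from $\|(\xi-\eta)\otimes h\|^2=\langle h,\sigma(\langle\xi-\eta,\xi-\eta\rangle)h\rangle\le\|\xi-\eta\|^2\,\|h\|^2$ the map $\xi\mapsto\xi\otimes h$ is norm-continuous. Approximating an arbitrary $\xi$ by finite sums $\sum_i b_i\eta_i$ with $b_i\in B$, $\eta_i\in X$ therefore places $\xi\otimes h$ in the closed span of the tensors $(b\eta)\otimes h$, so $BX\otimes_\sigma\H=X\otimes_\sigma\H$.

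For the converse I argue by contraposition: assuming the left action of $B$ on $X$ is degenerate, I must exhibit one representation $\sigma$ of $A$ for which $B$ acts $\sigma$-degenerately. The device is to repackage $X$ as a self-correspondence so that Theorem~\ref{elem} applies. Let $\phi\colon B\to L(X)$ be the given left action, put $D:=A\oplus B$, and regard $X$ as a right Hilbert $D$-module via $x\cdot(a,b):=xa$ and $\langle x,y\rangle_D:=(\langle x,y\rangle_A,0)$; this leaves the norm unchanged, so $X$ is still complete. Equip $X$ with the left $D$-action $\psi\colon D\to L(X)$, $\psi(a,b):=\phi(b)$, which is readily checked to be a $*$-homomorphism, so $(X,D)$ is a $\ca$-correspondence and $C:=0\oplus B$ is a $\ca$-subalgebra of $D$. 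Since $[\psi(C)X]=[\phi(B)X]=[BX]\ne X$, the left action of $C$ on $X$ is degenerate, so Theorem~\ref{elem} produces a representation $(\pi,t)$ of $(X,D)$ on a Hilbert space $\H$ with $C$ acting $\pi$-degenerately, i.e.\ $CX\otimes_\pi\H\ne X\otimes_\pi\H$.

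It remains to push this back to $A$, and this descent is the step I expect to require the most care. Put $\H_1:=\overline{\pi(A\oplus 0)\H}$, $\H_2:=\overline{\pi(0\oplus B)\H}$, and $\H_0:=(\H_1\oplus\H_2)^\perp$. Because $A\oplus 0$ and $0\oplus B$ are orthogonal ideals of $D$, the subspaces $\H_1,\H_2,\H_0$ are mutually orthogonal and reduce $\pi(D)$, and $\pi$ decomposes accordingly as $\sigma\oplus\rho\oplus 0$ for representations $\sigma\colon A\to B(\H_1)$ and $\rho\colon B\to B(\H_2)$. Now $x\cdot(0,b)=0$ forces every tensor $\xi\otimes k$ with $k\in\H_2$ to be zero in $X\otimes_\pi\H$, while $\pi(\langle\xi,\xi\rangle_D)=\pi(\langle\xi,\xi\rangle_A,0)$ annihilates $\H_0$, so tensors with $k\in\H_0$ vanish too; consequently $\xi\otimes k\mapsto\xi\otimes Pk$ (with $P$ the orthogonal projection of $\H$ onto $\H_1$) is a well-defined unitary $X\otimes_\pi\H\to X\otimes_\sigma\H_1$, and it carries $CX\otimes_\pi\H$ onto $BX\otimes_\sigma\H_1$. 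Hence $BX\otimes_\sigma\H_1\ne X\otimes_\sigma\H_1$, i.e.\ $B$ acts $\sigma$-degenerately on $X$, completing the contrapositive. (An alternative to the whole reduction is to rerun Kim's ultrafilter argument verbatim in this generality: fix a faithful $\sigma_0\colon A\to B(\H_0)$, pick a witness $x\in X$ and $\eps>0$ with $\|(\id-\phi(c))^{1/2}x\|>\eps$ for every positive contraction $c\in B$, choose unit vectors $h_c\in\H_0$ realizing this, form the ultrapower $\H=\prod_\U\H_0$ along an ultrafilter containing every tail $\{d:d\ge c\}$, and — exactly as in the proof of Theorem~\ref{elem}, using $c\le d\Rightarrow\phi(c)\le\phi(d)$ — check that $x\otimes(h_c)_c$ stays at distance $\ge\eps$ from $BX\otimes_\sigma\H$, where $\sigma$ is the amplification of $\sigma_0$.)
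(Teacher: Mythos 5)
Your proposal is correct and follows the paper's own route: both directions are handled the same way, and the substantive implication is reduced to Theorem~\ref{elem} by repackaging $X$ as a $\ca$-correspondence over $A\oplus B$ with left action through $B$ and inner product valued in $A\oplus 0$. The only differences are cosmetic — you apply Theorem~\ref{elem} to the subalgebra $0\oplus B$ rather than to all of $A\oplus B$ (which acts identically), and your explicit decomposition $\H=\H_1\oplus\H_2\oplus\H_0$ just spells out the identification $X\otimes_\pi\H\cong X\otimes_\sigma\H$ that the paper leaves implicit in its final line.
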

\begin{proof}
One direction is immediate: if $B$ acts nondegenerately, then it also acts $\sigma$-nondegenerately. For the other direction, assume that $B$ acts degenerately on $X$. Consider $X$ now as a $\ca$-correspondence over $A\oplus B$ with actions
\[
(a\oplus b) \xi (a'\oplus b'):= b\xi a'
\]
and inner product
\[
[\xi, \zeta]:=\langle \xi, \zeta\rangle \oplus 0, 
\]
where $a, a'\in A$,  $b , b'\in B$ and $\xi, \zeta \in X$.  Since $A\oplus B$ acts degenerately on $X$, use the previous result to find a representation $\pi : A\oplus B \rightarrow B(\H)$ so that 
\[
(A\oplus B)X \otimes_{\pi}\H \neq X \otimes_{\pi}\H.
\]
If $\sigma :=\pi\mid_{A \oplus 0}$, then we have from above that $ BX \otimes_{\sigma}\H \neq X \otimes_{\sigma}\H $ and the conclusion follows.
\end{proof}

\section*{Acknowledgements} Elias Katsoulis was partially supported by the NSF grant DMS-2054781. Christopher Ramsey was
supported by the NSERC Discovery Grant 2019-05430.

\end{document}